\newtheorem{theorem}{Theorem}[section]
\newtheorem{proposition}[theorem]{Proposition}
\newtheorem*{conjecture}{Boone--Higman Conjecture}
\theoremstyle{definition}
\newtheorem*{definition*}{Definition}
\newtheorem{notation}[theorem]{Notation}
\newtheorem{example}[theorem]{Example}
\renewcommand{\star}{\text{\small$*$}}
\newcommand{\N}{\mathbb{N}}
\newcommand{\R}{\mathscr{R}}
\newcommand{\Nuc}{\mathcal{N}}
\newcommand{\C}{\mathfrak{C}}
\tikzset{initial text={}}
\tikzstyle{initial by arrow}=   [after node path=
\begin{document}

\title[Hyperbolic into simple]{Embedding hyperbolic groups into finitely presented infinite simple groups}

\author[J.~Belk]{James Belk}
\address{School of Mathematics and Statistics, University of Glasgow, University Place, Glasgow, G128QQ, Scotland.}
\email{\href{mailto:jim.belk@glasgow.ac.uk
}{jim.belk@glasgow.ac.uk
}}

\author[C.~Bleak]{Collin Bleak}
\address{School of Mathematics and Statistics, University of St Andrews, St Andrews, Scotland KY8 6ER.}
\email{\href{mailto:collin.bleak@st-andrews.ac.uk}{collin.bleak@st-andrews.ac.uk}}

\maketitle

\begin{abstract} The Boone--Higman conjecture is that every recursively presented group with solvable word problem embeds in a finitely presented simple group.  We discuss a brief history of this conjecture and work towards it.  Along the way we describe some classes of finitely presented simple groups, and we briefly outline work of Belk, Bleak, Matucci, and Zaremsky showing that the broad class of hyperbolic groups embeds in a class of finitely presented simple groups.
\end{abstract}
 \section{Boone--Higman Conjecture}
 In this brief note, we trace through some history and constructions (both old and new) related to the Boone--Higman conjecture that every recursively presented group with solvable word problem embeds in a finitely presented simple group.

 We note that progress towards the conjecture has generally been hindered by our lack of knowledge (as mathematicians) of broad enough families of finitely presented simple groups to serve as hosts to Boone and Higman's conjectured embeddings, an issue that now seems to be easing.
 \subsection{The word problem for finitely generated groups.}

 In 1911 Max Dehn proposed several problems where progress would be needed before a meaningful theory of groups given by presentations could be developed~\cite{Dehn}.  One of these problems is now known as the \textit{word problem}.

 \begin{notation}[Word Problem] Given a finitely presented group
\[G = \langle x_1,x_2,\ldots,x_n\mid r_1,r_2,\ldots,r_m\rangle\]
give a procedure by which one can determine, for any given product expression involving the generators (and their inverses), whether or not the expression represents the identity of $G$.
\end{notation}

It turns out that the word problem is unsolvable in general.  That is, there exist finitely presented groups for which there is no algorithm to decide whether a given word in the generators represents the identity.  The first examples of such groups were given by Pyotr Novikov in 1955 \cite{Novikov55} and independently by William Boone in 1959 \cite{Boone59}.

 \subsection{Key Word Problem Results}
 There are several key results in the area of the word problem for groups.  We describe some of these here with some discussion.
 
 Firstly, and as mentioned above, the word problem cannot always be solved.
 \begin{theorem}[Novikov 1955 \cite{Novikov55} $\mid$ Boone 1959 \cite{Boone59}]
 There exists a finitely presented group $G$ with non-solvable word problem. 
 \end{theorem}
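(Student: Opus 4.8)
The plan is to exhibit a single finitely presented group whose word problem, if it were solvable, would solve a known undecidable problem. The starting point is the unsolvability of a purely combinatorial problem coming from computation: there is a finitely presented semigroup whose word problem is unsolvable (Markov--Post), obtained by encoding the configurations of a Turing machine as words and its transitions as defining relations, so that two words are equal in the semigroup precisely when one configuration evolves into the other. Equivalently, one can work directly from a fixed Turing machine (or modular machine) recognizing a recursively enumerable but non-recursive set, so that halting on a given input is undecidable. This is the ingredient that injects undecidability into the construction.

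First I would build the group $G$ from this machine by a sequence of HNN extensions over a free group. The generators fall into two kinds: letters that record the symbols and states appearing in a configuration, and stable letters $t_1, \dots, t_k$, one for each transition rule, whose defining relations force $t_i^{-1} u t_i = v$ whenever the rule rewrites the subword $u$ to $v$ in context. The effect is that conjugating the word encoding a configuration $C$ by an appropriate product of stable letters yields the word encoding any configuration reachable from $C$. The machine's computation is thereby simulated inside $G$: there is a distinguished word $w_n$ built from the input $n$ that becomes equal in $G$ to a fixed accepting word if and only if the machine halts on input $n$.

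The key technical tool is Britton's Lemma, the normal form theorem for HNN extensions, which governs exactly when a word involving the stable letters can be trivial: a reduced expression containing an occurrence of $t_i^{\pm 1}$ can equal the identity only if it contains a pinch $t_i^{-1} u t_i$ or $t_i u t_i^{-1}$ with $u$ lying in the relevant associated subgroup. Applying this lemma repeatedly shows that no unintended collapses occur, so that equality in $G$ faithfully mirrors the step-by-step action of the machine rather than permitting spurious cancellations. Consequently, deciding whether $w_n$ equals the accepting word in $G$ is equivalent to deciding whether the machine halts on $n$.

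Finally, since $G$ is presented by finitely many generators and relations -- those of the base free group together with the finitely many HNN relations -- an algorithm solving the word problem for $G$ would decide the halting problem, a contradiction; hence $G$ has unsolvable word problem. The main obstacle is the second step, namely ensuring the encoding is faithful. Semigroup rewriting is directed and context-sensitive, whereas group relations are symmetric and permit cancellation and backward computation, so the construction must be arranged, through the precise choice of stable letters, associated subgroups, and auxiliary symbols separating left and right contexts, so that the only words equal to the identity are those corresponding to genuine computations. Britton's Lemma is what makes this control possible, and verifying its hypotheses for the specific presentation is the crux of the argument.
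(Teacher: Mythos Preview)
The paper does not give its own proof of this theorem; it is stated with attribution to Novikov and Boone and then used as background. So there is nothing in the paper to compare your argument against.

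That said, your outline is a faithful sketch of the modern presentation of Boone's construction: encode a Turing (or modular) machine into a finitely presented group built by iterated HNN extensions over a free base, and use Britton's Lemma to show that equalities in the group correspond exactly to valid computation steps, so that a solution to the word problem would decide halting. This is the standard route found, for example, in Rotman or Lyndon--Schupp. Novikov's original 1955 proof predates HNN extensions and Britton's Lemma and proceeds by a much longer direct combinatorial analysis; the HNN/Britton framework you invoke is essentially Boone's approach as later streamlined. Your identification of the crux---proving faithfulness of the encoding despite the symmetry and cancellation available in groups---is correct, and Britton's Lemma is indeed the tool that makes this tractable. As a proof \emph{sketch} it is sound; turning it into an actual proof requires writing down the specific presentation and carrying out the Britton's Lemma verification, which is lengthy but routine once the machine is fixed.
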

 A second important insight is that the theory of groups with solvable word problem is linked with the theory of simple groups.
\begin{theorem}[Kuznetsov 1958 \cite{Kuznetsov}]
A finitely presented simple group G has solvable word problem.
\end{theorem}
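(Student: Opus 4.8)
The plan is to exploit the fact that a finitely presented simple group is recursively enumerable in two complementary ways at once, and then to invoke the standard recursion--theoretic principle that a subset of a countable set is decidable as soon as both it and its complement are recursively enumerable. Throughout I take $G$ to be nontrivial, as is usual for simple groups; the trivial group has trivially solvable word problem.

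First I would record the ``easy half,'' which holds for \emph{every} finitely presented group $G = \langle X \mid R\rangle$ with $X = \{x_1,\dots,x_n\}$ and $R$ finite. A word $w$ in the $x_i^{\pm 1}$ represents the identity precisely when it lies in the normal closure of $R$ in the free group $F(X)$, i.e.\ when it can be written freely as a product of conjugates $g\,r^{\pm 1}g^{-1}$ of relators. By systematically generating all such finite products, freely reducing, and comparing with $w$, one obtains a procedure that halts and reports ``yes'' exactly when $w =_G 1$. Thus the set of words equal to the identity is recursively enumerable.

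The heart of the argument is to show that, when $G$ is simple, the set of words \emph{not} equal to the identity is also recursively enumerable. The key observation is that if $w \neq_G 1$ then the normal closure $\langle\!\langle w\rangle\!\rangle$ is a nontrivial normal subgroup, hence all of $G$ by simplicity; consequently every generator $x_i$ becomes trivial in the quotient $\langle X \mid R, w\rangle \cong G/\langle\!\langle w\rangle\!\rangle$. Conversely, if $w =_G 1$, then adjoining $w$ as a relator leaves $G$ unchanged, so this quotient is $G$ itself, and since $G \neq 1$ its generators are certainly not all trivial. Hence $w \neq_G 1$ if and only if each of the finitely many generators $x_1,\dots,x_n$ lies in the normal closure of $R \cup \{w\}$. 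Running the enumeration of the previous paragraph with $R \cup \{w\}$ in place of $R$, and waiting for every $x_i$ to be certified trivial, therefore gives a procedure that halts exactly when $w \neq_G 1$, so the complement is recursively enumerable as well.

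Finally I would combine the two enumerations by dovetailing: given $w$, alternately advance the procedure witnessing $w =_G 1$ and the procedure witnessing $w \neq_G 1$. Since $G$ is simple and nontrivial, exactly one of the two halts, so the combined process always terminates with the correct answer, and the word problem is solved. The main obstacle, and the only place simplicity is used, is the biconditional of the third paragraph: recognizing $w \neq_G 1$ from the collapse of all generators. Its forward direction rests on simplicity forcing the normal closure of any nontrivial element to be everything, while its reverse direction rests on $G$ being nontrivial, so that not all generators can die.
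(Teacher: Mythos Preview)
Your proof is correct and follows essentially the same approach as the paper: form the quotient $H=\langle X\mid R,w\rangle$, use simplicity to see that $H$ is trivial exactly when $w\neq_G 1$, and then dovetail the enumeration of consequences of $R$ against the enumeration of consequences of $R\cup\{w\}$. Your write-up is somewhat more explicit about the biconditional and the nontriviality hypothesis, but the underlying idea is identical.
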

\begin{proof}
Suppose 
\[
G = \langle x_1,x_2,\ldots,x_n\mid r_1,r_2,\ldots,r_m\rangle
\]
is a finite presentation of a simple group $G$, and $w\in \bigl(X\cup X^{-1}\bigr)^{\star}$ is a word in the generators and their inverses.

Set $H$ to be the group presented as follows:
\[
H=\langle X\mid r_1,r_2,\ldots,r_m,w\rangle.
\]
Since $G$ is simple, the group $H$ is nontrivial if and only if $w$ represents the identity in~$G$. Now run two algorithms in parallel: Enumerate all consequences of relators in $G$ and $H$.  If $w$ is a consequence of the relators in $G$ then it represents the identity in $G$.  If all the generators in $X$ are consequences of the relations of $H$, then $w$ is not the identity in $G$.  One of these algorithms must terminate positively.
\end{proof}

And finally, the core result of Boone and Higman.
 
 \begin{theorem}[Boone--Higman 1974 \cite{BooneHigman74}]
 A finitely generated group $G$ has solvable word problem  if and only if it embeds in a simple subgroup of a finitely presented group.
 \end{theorem}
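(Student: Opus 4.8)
The plan is to prove the two implications separately, since they are of quite different character. The implication ``embeds $\Rightarrow$ solvable word problem'' is a mild generalization of Kuznetsov's theorem above, whereas the converse is the substantial direction and rests on an effective group-theoretic construction.

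For the first implication, suppose $G = \langle g_1,\ldots,g_n\rangle$ embeds via a map $\phi$ into a finitely presented group $H = \langle Y \mid s_1,\ldots,s_k\rangle$ with $\phi(G) \le S \le H$ and $S$ simple. Fixing words over $Y$ representing each $\phi(g_i)$, a word $w$ in the $g_i$ satisfies $w =_G 1$ if and only if $\phi(w) =_H 1$. I would run two enumerations in parallel. The first enumerates consequences of the relators $s_j$ and halts if it ever certifies $\phi(w) =_H 1$. The second searches, for each $i$, for a certified identity of the form $\phi(g_i) =_H \prod_j h_j\, \phi(w)^{\epsilon_j} h_j^{-1}$ with $h_j \in H$ and $\epsilon_j \in \{\pm 1\}$, halting once all $n$ generators are so expressed. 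If $w \neq_G 1$ then $\phi(w) \neq_S 1$, so by simplicity the normal closure of $\phi(w)$ in $S$ is all of $S$ and in particular contains every $\phi(g_i)$; as $S$-conjugation is $H$-conjugation, the second enumeration halts. If $w =_G 1$ the first enumeration halts. Giving priority to the first resolves the degenerate case of trivial $G$, so the word problem is decidable. (This argument in fact needs only that $H$ be recursively presented.)

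For the converse, suppose $G$ is finitely generated with solvable word problem; then the set of words representing $1$ is recursive, so $G$ is recursively presented. I would reduce to a single construction using the Higman embedding theorem, that every finitely generated recursively presented group embeds in a finitely presented group. It therefore suffices to embed $G$ into a \emph{finitely generated, recursively presented, simple} group $S$: Higman then embeds $S$ into a finitely presented $H$, and $G \le S \le H$ is the desired picture. To build $S$, I would use the classical technique of forcing simplicity through a direct limit of HNN extensions: construct an ascending chain $G = S_0 \le S_1 \le S_2 \le \cdots$ in which, at each stage, a stable letter is adjoined making a prescribed pair of nontrivial elements conjugate, so that in $S = \bigcup_m S_m$ any two nontrivial elements are conjugate and hence $S$ is simple. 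Britton's lemma guarantees that each HNN extension embeds its base, so $G$ survives faithfully into $S$, and finite generation of the limit is arranged by the standard absorption trick.

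The main obstacle is \emph{effectiveness}. Enumerating which pairs of elements to conjugate, and matching the orders required to form each HNN extension, presupposes the ability to detect nontriviality and compute with the groups $S_m$; it is precisely the solvable word problem of $G$, propagated up the chain, that allows the construction to be carried out algorithmically and hence yields a \emph{recursive} presentation of $S$ rather than a merely abstract one. Maintaining, simultaneously, simplicity of the limit, faithfulness of the embedding (via Britton's lemma), finite generation, and recursiveness of the presentation is the delicate bookkeeping at the heart of the proof. Once $S$ is in hand, the Higman embedding theorem supplies the finitely presented overgroup and completes the argument.
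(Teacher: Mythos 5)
The paper does not actually prove this theorem; it is quoted from Boone and Higman's 1974 article, and the only proof supplied in this part of the paper is of Kuznetsov's theorem. So your proposal can only be measured against the standard argument. Your forward direction is correct and essentially complete: it is the relativized Kuznetsov argument, with the second semi-decision procedure searching for expressions of each $\phi(g_i)$ as a product of conjugates of $\phi(w)^{\pm 1}$, and your remark that only recursive presentability of $H$ is needed is right. (The degenerate case of trivial $G$ is cleaner to dispose of as a separate, trivially decidable case than by giving one enumeration ``priority,'' since the wrong enumeration could still halt first; but this is cosmetic.)

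The converse direction contains genuine gaps. Most seriously, a strictly ascending union $S=\bigcup_m S_m$ of proper subgroups is never finitely generated: any finite generating set would lie in some $S_M$, forcing $S=S_M$. So ``finite generation of the limit is arranged by the standard absorption trick'' cannot be carried out on the chain you describe; the finitely generated simple overgroups of Goryushkin and Schupp come from a genuinely different construction. Fortunately finite generation of $S$ is not needed: the theorem only requires the simple group to be a subgroup of a finitely presented group, and the countably generated form of Higman's embedding theorem applies to any recursively presented countable group, so one can embed the countable simple union directly. Second, ``making a prescribed pair of nontrivial elements conjugate'' is obstructed by element orders and is not what simplicity of the union actually requires; the standard lemma is that for any nontrivial $a$ and any $b$ one can pass to an overgroup (via $G*F_2$ and a single further HNN extension) in which $b$ is a product of boundedly many conjugates of $a^{\pm 1}$, which sidesteps orders entirely and still forces every nontrivial normal subgroup of the union to be everything. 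Finally, the effectiveness claim --- that solvability of the word problem propagates through each stage so that the limit is recursively presented --- is the technical heart of the proof and is asserted rather than argued; it does hold, via Britton's lemma and decidability of membership in the associated subgroups, but it is precisely what must be proved rather than labeled ``delicate bookkeeping.''
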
 

 It is an interesting side story that it seems that Boone and Higman were not aware of Kuznetsov's Theorem until a comment of Richard Thompson at a conference in Irvine in 1969, which seems to have opened the door towards their work (this is discussed in \cite{BooneHigman74}).  In any case, Boone and Higman had already made their conjecture by 1973 (see \cite{Boone73}).
\subsection{The conjecture and a new result towards it}
We are now in position to state the conjecture.
\begin{conjecture}[1973]
A finitely generated group has solvable word problem if and only if it embeds in a finitely presented simple group.
\end{conjecture}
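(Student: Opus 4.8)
The plan is to treat the two implications separately, since one is routine while the other is the entire content of the problem. For the reverse direction, suppose $G$ embeds in a finitely presented simple group $S$. By Kuznetsov's Theorem, $S$ has solvable word problem, and I would fix, for each generator $g_i$ of $G$, an explicit word $u_i$ in the generators of $S$ representing the image of $g_i$ under the embedding; such words exist because the embedding is given. Now, given a word $w$ in the generators of $G$, substituting the $u_i$ produces a word $\widehat{w}$ in the generators of $S$, and since the embedding is injective one has $w = 1$ in $G$ if and only if $\widehat{w} = 1$ in $S$. Running the word-problem algorithm for $S$ on $\widehat{w}$ then decides the question, so embeddability in a finitely presented simple group forces solvable word problem, requiring nothing beyond Kuznetsov's Theorem.

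The forward direction is the open content, and here I would start from the Boone--Higman Theorem, which already yields an embedding $G \hookrightarrow S \leq P$ with $S$ simple and $P$ finitely presented. The one missing ingredient is that $S$ itself need not be finitely presented: Boone and Higman guarantee only a simple \emph{subgroup} of a finitely presented group, whereas the conjecture demands a finitely presented simple \emph{host}. The whole difficulty is to close this gap, replacing $S$ by a genuinely finitely presented simple overgroup of $G$.

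My strategy would be not to attempt to finitely present $S$ directly, but to exploit the recently enlarged supply of finitely presented simple groups as universal hosts. Concretely, I would seek a faithful action of $G$ on a Cantor-set-like self-similar space that is sufficiently \emph{computable}, ideally realized by transducers or finite-state machinery whose existence is powered by the solvability of the word problem, and then embed $G$ into a Thompson-like simple group built from that action, such as a twisted Brin--Thompson group or a R\"over--Nekrashevych group. Such groups are simple by design, and under mild hypotheses on the action they are finitely presented, with the embedding of $G$ arising from the dynamics. This is precisely the template that succeeds for hyperbolic groups, where the action on the Gromov boundary supplies the required well-behaved dynamics.

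The main obstacle, and the reason the conjecture remains open, is manufacturing such an action in full generality. Solvability of the word problem is an intrinsically weak, purely algorithmic hypothesis, and it is not at all clear how to extract from it the kind of rational or automatic action on a self-similar space that the Thompson-like machinery consumes. For hyperbolic groups one leans on a canonical boundary carrying an automatic structure; for an arbitrary group with solvable word problem no comparable geometry is available, so the hard step is to convert a bare decision algorithm into concrete, finitely describable dynamics capable of supporting a finitely presented simple host.
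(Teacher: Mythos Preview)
The statement is a conjecture, and the paper does not prove it; there is no ``paper's own proof'' to compare against. You correctly recognise this: your reverse implication is the standard Kuznetsov argument (essentially the content of the paper's Theorem~1.2 plus the routine observation that solvability of the word problem passes to finitely generated subgroups), and you correctly flag the forward implication as open.

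Your strategic sketch for the forward direction---realise $G$ dynamically on a self-similar space via finite-state machinery, then embed into a Thompson-like finitely presented simple group such as a twisted Brin--Thompson group---is exactly the template the paper executes in the special case of hyperbolic groups, using the horofunction boundary and the tree of atoms to supply the rational action. Your diagnosis of the obstacle in general (no canonical geometry from a bare decision algorithm) is also the point the paper implicitly makes when it singles out hyperbolic groups as a natural first class. So there is nothing to correct: you have accurately identified that one direction is easy and the other remains open, and your proposed line of attack matches the paper's partial progress.
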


In the remainder of this note, we trace through the key ideas of a new theorem \cite{BelkBleakMatucciZaremsky} which confirms the Boone--Higman conjecture for a natural first ``broad'' class of groups.
\begin{theorem}[Belk--Bleak--Matucci--Zaremsky]
Every hyperbolic group embeds in a finitely presented simple group.
\end{theorem}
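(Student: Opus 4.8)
The plan is to make the hyperbolic group $G$ act on a symbolic model of its Gromov boundary $\partial G$ by transducer (``rational'') homeomorphisms, and then to show that a single Thompson/Nekrashevych-type group of such homeomorphisms is finitely presented and simple and already contains the image of $G$. Note that solvability of the word problem for $G$ (via Dehn's algorithm) together with the Boone--Higman theorem already yields an embedding of $G$ into a simple \emph{subgroup} of a finitely presented group; the entire difficulty is to upgrade the host to a group that is itself finitely presented \emph{and} simple, and the route to this is explicit dynamics on $\partial G$. Since the elementary (virtually cyclic) groups have finite boundary and the dynamical argument degenerates, I would dispose of them separately by elementary means and assume throughout that $G$ is nonelementary.

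First I would code the boundary. Since hyperbolic groups are automatic, the language of geodesic words over a finite generating set is regular, and one can use this to present $\partial G$ as the limit space of a \emph{self-similar} structure over a finite alphabet $\A$: boundary points correspond to infinite $\A$-words modulo a finitely described gluing relation. In this coding the left-translation action of each $g \in G$ rewrites infinite words by a finite-state machine, so $g$ acts as a rational homeomorphism, and the states needed across all of $G$ assemble into a \emph{finite} set, the nucleus $\Nuc$; finiteness of $\Nuc$ is precisely a contraction property and comes from the thin-triangles geometry of $G$. This coding step is the crux and the part I expect to be hardest, for two reasons: (i) one must produce a coding that is simultaneously faithful (separating group elements) and finite-state (rational); and (ii) the boundary action of $G$ can have nontrivial finite kernel (e.g.\ a finite normal subgroup fixing $\partial G$ pointwise), so I would enrich the coded space with enough local geodesic data to force the $G$-action to be faithful.

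With $G$ realized inside the group $\R$ of all rational homeomorphisms of this limit space, it remains to carve out a finitely presented simple group containing the copy of $G$ --- or, better, to show that the natural Röver--Nekrashevych group $V$ attached to the self-similar structure has these properties and already contains $G$. Simplicity I would obtain from the groupoid picture: the étale groupoid of germs of the expanding boundary dynamics is \emph{minimal} and \emph{purely infinite}, and by the Matui--Nekrashevych theory the commutator subgroup of the topological full group of such an ample groupoid is simple. Finite presentation (indeed type $F_\infty$) I would derive from finiteness of the nucleus $\Nuc$ via Nekrashevych's contraction machinery: one builds a contractible complex of ``expansions'' on which the group acts cocompactly with finitely presented cell stabilizers, all of the finiteness feeding off the finite alphabet $\A$ and the finite nucleus $\Nuc$. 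Assembling these ingredients, the relevant commutator subgroup is a finitely presented infinite simple group containing $G$, which proves the theorem.
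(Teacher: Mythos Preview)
Your outline captures the paper's broad strategy---realize $G$ as a contracting group of rational homeomorphisms of a totally disconnected boundary, then pass to a Thompson-like overgroup---but the final assembly has a genuine gap. You invoke Matui--Nekrashevych to get simplicity of the \emph{commutator subgroup} of the topological full group, and Nekrashevych's contraction machinery to get finite presentation of the \emph{full group itself}. These are statements about two a priori different groups, and you do not bridge them. The paper is explicit that its analogue of your R\"over--Nekrashevych group, the group $[[G]]$ of piecewise-$G$ homeomorphisms, is typically \emph{not} simple; only its commutator subgroup is. You give no argument that this commutator subgroup is itself finitely presented, nor that the image of $G$ lands inside it. The paper does not close this gap by analysing the commutator subgroup directly. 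Instead it takes a further step: it shows that $[[G]]$ acts faithfully and oligomorphically on a countable dense orbit in the boundary, with finitely generated stabilizers of finite sets, and then applies a theorem of Zaremsky to embed $[[G]]$ (and hence $G$) into a \emph{twisted Brin--Thompson group} $SV_{[[G]]}$, which Belk--Zaremsky proved is always simple and, under the stated hypotheses, finitely presented. That extra embedding into a twisted Brin--Thompson group is the key ingredient missing from your proposal.

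A secondary divergence: the paper does not code the Gromov boundary via the geodesic language but works instead with the \emph{horofunction boundary} $\partial_h G$, realized as the boundary of an explicit self-similar tree (the ``tree of atoms'') built from orientation patterns on balls in the Cayley graph. This space is totally disconnected by construction, so no separate symbolic coding is required, and the self-similar structure arises canonically rather than through a choice of normal form. Faithfulness of the action and the absence of isolated points are arranged not by enriching the model but simply by replacing $G$ with $G*F_2$ when necessary.
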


 \section{Hyperbolic Groups}
 Motivated by a construction and algorithm of Dehn, a class of groups with a very fast (linear time) algorithm for solving the word problem was defined.  As we will describe in Subsection \ref{subsec:Gromov}, this class eventually became known as the  \textit{hyperbolic groups}.

 \subsection{Surface groups and Dehn's algorithm}
 Max Dehn in 1911 proposed the word problem as being one of three fundamental problems for finitely presented groups.  He also gave an elegant solution to the word problem for $G_S=\pi_1(S,*)$ for $S$ a closed orientable surface of genus at least $2$.  
 
 \begin{center}
 \begin{minipage}{0.6\textwidth}
     \includegraphics[width=2.8in, height=1 in]{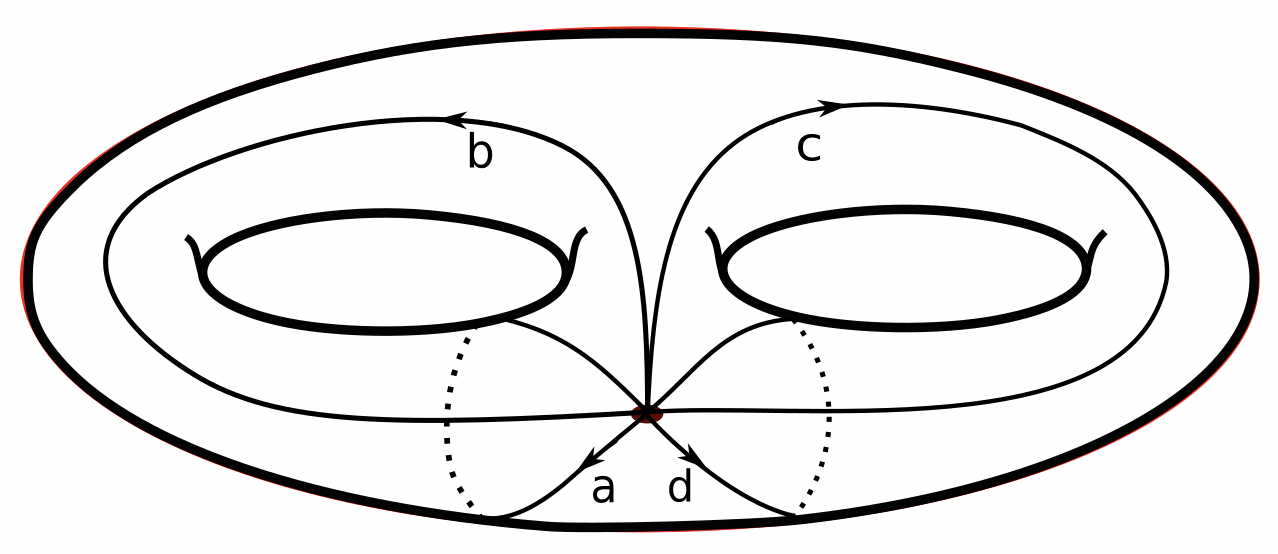}\\
     \begin{center}$G_S=\langle a,b,c,d\mid aba^{-1}b^{-1}cdc^{-1}d^{-1}\rangle$     
\end{center} \end{minipage}
 \begin{minipage}{0.39\textwidth}
\vspace{-.2 in}
\hspace{.2 in}
    \includegraphics[width=1.4in, height=1.3 in]{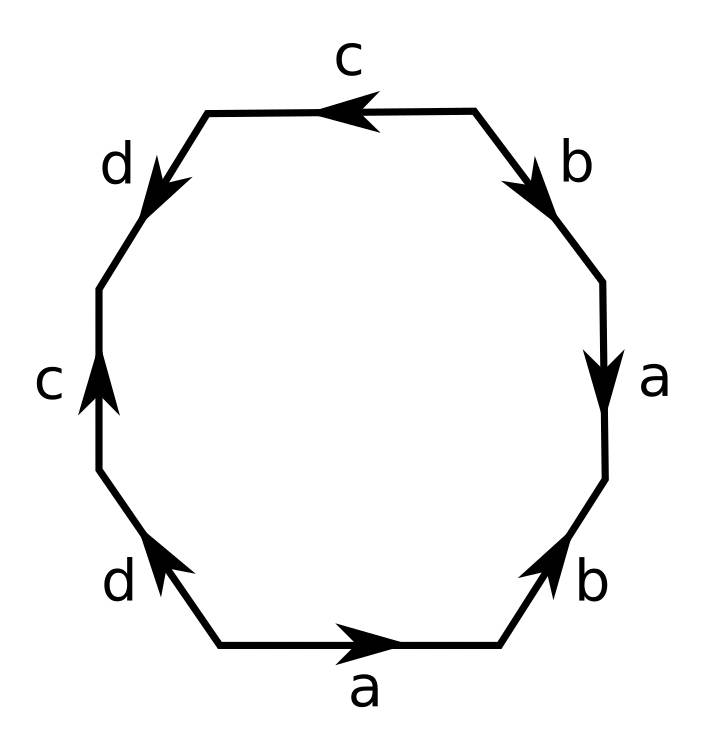}\\
 \end{minipage}
 \end{center}

 When the genus of $S$ is greater than one there is a presentation of $G_S$ with ``small cancellation properties'' which enable a very fast algorithm to solve the word problem for the group.  We call such a presentation a \textit{Dehn presentation}.

  The key ingredient for a Dehn presentation is that each basic relator of the presentation represents two circular relators (for the example presentation of $G_S$ above, reading around the octagon from any start position in the two basic directions). 
 Then, the overall set of created circular relators must not share any long overlapping common word: any maximal common substrings of two such circular relators must have length less than half the length of each of the circular relators involved.  For the presentation of $G_S$ above, any two strings read off two different ways from the octagon can never share a common substring longer than one letter, and $1/8<1/2$.

\subsection{Dehn presentations and hyperbolic groups}\label{subsec:Gromov}
 The class of groups which admit a Dehn presentation became a well-studied class of groups over time.  Inspired from various existing mathematical theories including hyperbolic geometry and low-dimensional topology (including Max Dehn's results for surface groups) Gromov  found a beautiful geometric interpretation of these groups \cite{Gromov}.  Gromov understood that these are precisely those groups which act ``geometrically'' (with a properly discontinuous and cocompact action by isometries) on hyperbolic spaces, and called the resulting class of groups \emph{hyperbolic groups}.
 
 

 Suppose \[G=\langle X\mid R\rangle\] is a Dehn presentation for $G$.  As from this presentation, no two induced circular relators overlap on a long substring of either relator, we have that any  $w\in (X\cup X^{-1})^*$  with $w =_G 1_G$ must contain some long substring which is from one of the relators, or, a freely cancelling pair of letters. In the first case one replaces the found partial relator by the inverse of its complementary part, which is a shorter word, without changing the element the string represents.  In the second case, one carries out a free cancellation, also reducing the length of the word.  Now simply repeat such reductions until the whole word collapses: this is Dehn's algorithm.
 
By way of example, below, we follow Dehn's algorithm for a short word equivalent to the identity from the presentation of $G_S$. 
  \begin{example}
  [Dehn's algorithm by example]
      \[G_S=\langle a,b,c,d\mid aba^{-1}b^{-1}cdc^{-1}d^{-1}\rangle\]

\vspace {-.2 in}
      \begin{align*}
      d^{-1}acdc^{-1}d^{-1}aba^{-1}{\color{red}{a^{-1}b^{-1}cdc^{-1}}}\\
      d^{-1}acdc^{-1}d^{-1}aba^{-1}{\color{blue}{b^{-1}a^{-1}d}}\\
      d^{-1}a{\color{red}cdc^{-1}d^{-1}aba^{-1}b^{-1}} a^{-1}d\\
      d^{-1}{\color{red}a a^{-1}}d\\
    {\color{red}d^{-1} d}\\
       \varepsilon\\
      \end{align*}
\end{example}
  \begin{theorem}[Gromov 1987, Ollivier 2005]
With overwhelming probability, a random group in the few relator model with various lengths is hyperbolic.\end{theorem}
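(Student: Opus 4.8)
The plan is to reduce the statement to the classical small cancellation theory already foreshadowed by the discussion of Dehn presentations, and then to verify the small cancellation hypothesis by a first moment (counting) argument. Fix the number of generators $k \geq 2$ and the number of relators $m$, and let the relator lengths be $\ell_1, \dots, \ell_m$; the few relator model chooses each relator $r_i$ independently and uniformly at random among the cyclically reduced words of length $\ell_i$ over $X \cup X^{-1}$, where $X = \{x_1, \dots, x_k\}$. Here ``with overwhelming probability'' means that the probability of the event in question tends to $1$ as $\ell_{\min} := \min_i \ell_i \to \infty$, with the various $\ell_i$ growing comparably. The goal is to show that, in this limit, the random presentation $\langle X \mid r_1, \dots, r_m \rangle$ is, with overwhelming probability, a Dehn presentation.

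Recall the metric small cancellation condition $C'(\lambda)$: passing to the symmetrized relator set $R$ (all cyclic conjugates of the $r_i$ and their inverses), a \emph{piece} is a word that occurs as a subword of two distinct elements of $R$, and $C'(\lambda)$ demands that every piece $u$ appearing inside a relator $r \in R$ satisfies $|u| < \lambda\, |r|$. For any $\lambda \le 1/6$, a presentation satisfying $C'(\lambda)$ is exactly of the kind described above in connection with Dehn's algorithm: no two circular relators overlap in a subword as long as a fixed small fraction of either relator, so the maximal common substrings are far shorter than half the relator length. Hence, by the discussion of Dehn's algorithm preceding this theorem, such a presentation is a Dehn presentation and the group it defines is hyperbolic. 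It therefore suffices to fix some $\lambda < 1/6$ and prove that the random presentation satisfies $C'(\lambda)$ with overwhelming probability.

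This is where the probabilistic input enters. Set $L = \lceil \lambda\, \ell_{\min} \rceil$; a violation of $C'(\lambda)$ forces the existence of a piece of length at least $L$. I would estimate the expected number of such long pieces by the first moment method. The symmetrized set $R$ has $O(m\, \ell_{\max})$ elements, so the number of ordered pairs of positions at which a common subword could be read off is $O\bigl(m^2 \ell_{\max}^2\bigr)$. For a single such pair of length-$L$ windows, the probability that the two random (cyclically reduced) words agree letter by letter decays geometrically, at a rate governed by the branching factor $2k-1$ of reduced words; concretely the agreement probability is bounded above by $C\,(2k-1)^{-L}$ for a constant $C = C(k)$. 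Multiplying, the expected number of pieces of length $\geq L$ is at most
\[
  C'\, m^2\, \ell_{\max}^2\, (2k-1)^{-\lambda \ell_{\min}},
\]
which tends to $0$ as $\ell_{\min}\to\infty$ whenever $k \ge 2$ and $m, \lambda$ are fixed, and more generally so long as $m$ does not grow too fast with the lengths. By Markov's inequality the probability that any piece of length $\geq L$ exists tends to $0$, so $C'(\lambda)$ holds with overwhelming probability, completing the argument.

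The main obstacle is the clean execution of the counting step: one must pin down the per-window agreement probability for \emph{cyclically reduced} words (the reduction and cyclic constraints correlate neighbouring letters and must be shown not to spoil the geometric decay), and one must track the overlap condition relative to each relator's own length when the $\ell_i$ differ, which is why the bound is naturally phrased in terms of $\ell_{\min}$. Everything downstream --- deducing hyperbolicity from $C'(1/6)$ --- is classical and is exactly the Dehn-algorithm mechanism already described in the text.
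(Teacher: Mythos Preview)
The paper does not give a proof of this theorem at all: it is quoted as a known result, attributed to Gromov and Ollivier, and used only to motivate why hyperbolic groups are a natural test case for the Boone--Higman conjecture. There is therefore nothing in the text to compare your argument against.

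That said, your sketch is the standard route to this result and is essentially what is carried out in the cited references: one shows that a random few-relator presentation satisfies $C'(\lambda)$ for some $\lambda\le 1/6$ with probability tending to~$1$, via a first-moment bound on the number of long pieces, and then invokes the classical fact that $C'(1/6)$ presentations are Dehn presentations and hence define hyperbolic groups. The issues you flag (handling the correlations introduced by cyclic reduction, and pieces arising from a relator overlapping itself at two distinct cyclic positions) are exactly the points that require care in a full write-up, but they do not change the exponential decay of the agreement probability, so the argument goes through.
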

     
Thus, hyperbolic groups form a natural first ``port-of-call'' for verifying the Boone--Higman conjecture: a vast class of groups essentially defined by the property of having a nice solution to the word problem.
     

\section{Homeomorphism groups of Cantor space}

\subsection{Cantor space $\C$}

A \textit{Cantor space} is any space which is homeomorphic to the usual middle-thirds Cantor set. By a theorem of Brouwer, any compact, totally disconnected metrizable space without isolated points is a Cantor space.

A basic example of a Cantor space is the infinite product $\C=\{0,1\}^\omega$, whose points are infinite binary sequences.  Given a finite binary sequence $\alpha$, the corresponding \textit{cone} $C_\alpha$ is the set of all points that have $\alpha$ as a prefix.  Under the homeomorphism from $\C$ to the middle-thirds Cantor set, these map to subsets which are similar to the whole Cantor set:
\begin{center}
\includegraphics{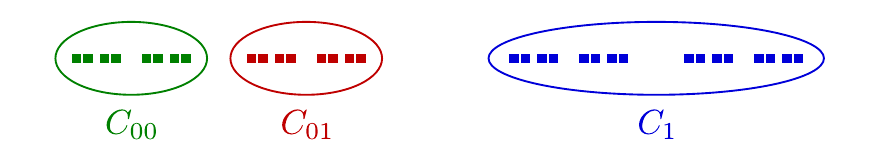}
\end{center}
Note that the \textit{clopen sets} in $\C$ (i.e.\ the sets that are both closed and open) are precisely the sets that are finite unions of cones.

Any two cones in $\C$ have a \emph{canonical homeomorphism} between them.
\begin{center}
\includegraphics{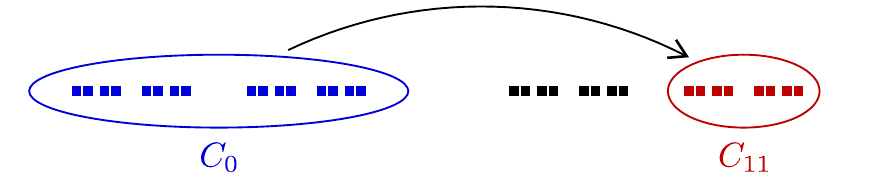}
\end{center}
Specifically, the canonical homeomorphism from $C_\alpha$ to $C_\beta$ maps each sequence in $C_\alpha$ to the sequence in $C_\beta$ obtained by replacing the prefix $\alpha$ with the prefix~$\beta$.

\subsection{The rational group $\mathscr{R}$}

If $f\colon \C\to \C$ is a homeomorphism and $C_\alpha$ is a cone in the domain, the \textit{local action} of $f$ on $C_\alpha$ is the map $f_\alpha\colon \C\to \C$ that fits into a commutative square
\[
{
\xymatrix@C=0.35in@R=0.35in{
  C_\alpha\ar^{f}[r]\ar_{\sim}[d] &   C_{\beta} \ar^{\sim}[d] \\
  \C\ar_{f_\alpha}[r] &   \C
}
}
\]
where the vertical maps are canonical homeomorphisms, and $C_\beta$ is the smallest cone that contains $f(C_\alpha)$.  A homeomorphism $f$ is \textit{rational} if it has only finitely many different local actions.

In 2000, Grigorchuk, Nekrashevych, and Suschanski\u\i\ observed that the rational homeomorphisms of $\C$ form a group under composition, which they refer to as the \textit{rational group}~$\mathscr{R}$ (see \cite{GNS2000}).  Though we have defined $\mathscr{R}$ using the binary alphabet $\{0,1\}$, they showed that the group $\mathscr{R}_d$ determined by any finite alphabet with $d\geq 2$ letters is isomorphic to the binary group.  They also showed that rational homeomorphisms could be described in a certain way using finite-state transducers, with one state for each of the local actions, as in the example below.
\begin{center}
\boldmath
\begin{minipage}{0.49\textwidth}
\begin{center}
\begin{tikzpicture}[shorten >=0.5pt,node distance=2cm,on grid,auto, semithick,every state/.style={fill=blue!25!white,text=black}]
\node[rectangle,draw=black!150,fill=blue!25!white,text=black] (a) at (90:1.2) {$a$};
\node[circle,draw=black!50,fill=blue!25!white,text=black] (b) at (270:1.2) {$b$};
\path [->]
(a) edge [->]node{\!\small$0/\varepsilon$}(b)
(a) edge [min distance = 0.9 cm, in = 105,out=75]node[swap]{$1/11$}(a)
(b) edge [->,out=135,in=225,looseness=1.5]node{\small$0/0$}(a)
(b) edge [->,,out=45,in=315,looseness=1.5]node[swap]{\small$1/10$}(a);
\end{tikzpicture}
\end{center}
\end{minipage}
\begin{minipage}{0.49\textwidth}

\vspace{.5 in}
\hspace{.05 in} On infinite rooted binary tree,

\vspace{.2 in}
\begin{tikzpicture}[
      inner sep=0pt,
      baseline=-30pt,
      level distance=20pt,
      level 1/.style={sibling distance=30pt},
      level 2/.style={sibling distance=15pt},
      level 3/.style={sibling distance=7pt}
    ]
    \node (root) [circle,fill] {}
    child {node (0) [circle,fill] {}
      child {node (00) {$00$}}
      child {node (01) {$01$}}}
    child {node (1) {$1$}};
  \end{tikzpicture}
  $\;\longrightarrow\;$
  \begin{tikzpicture}[
      inner sep=0pt,
      baseline=-30pt,
      level distance=20pt,
      level 1/.style={sibling distance=30pt},
      level 2/.style={sibling distance=15pt},
      level 3/.style={sibling distance=7pt}
    ]
    \node (root) [circle,fill] {}
    child {node (0) {$0$}}
    child {node (1) [circle,fill] {}
      child {node (10) {$10$}}
      child {node (11) {$11$}}};
  \end{tikzpicture}
  
  \vspace{.25 in}
  \hspace{.5 in}
  iteratively.\\ 
  \end{minipage}
\end{center}
Belk, Hyde, and Matucci later proved that the group $\mathscr{R}$ is simple, but not finitely generated~\cite{Belk-Hyde-Matucci-1}.  

A rational homeomorphism is \textit{synchronous} if it maps each cone in the domain to a cone in the range (so each local action $f_\alpha$ is a homeomorphism).  Synchronous rational homeomorphisms in $\mathscr{R}_d$ act as automorphisms of the infinite $d$-ary tree, of which the $d$-ary Cantor space $\{0,\ldots,d-1\}^\omega$ is the boundary.  Groups of synchronous rational homeomorphisms include the famous Grigorchuk group of intermediate growth, as well as other self-similar groups.  On the other hand, Thompson's groups $F$, $T$, and $V$ act on $\C$ by homeomorphisms which are rational but not necessarily synchronous.

\subsection{Contracting groups}

A group $G\leq \mathscr{R}_d$ of synchronous rational homeomorphisms is \textit{self-similar} if every local action of every element of $G$ again lies in~$G$.  The theory of self-similar groups has been developed extensively by Nekrashevych~\cite{Nekrashevych-book}, and includes groups such as the Grigorchuk group, the Gupta--Sidki groups, and iterated monodromy groups for holomorphic functions.

The most important class of self-similar groups are the contracting groups.  Here a group $G\leq\mathscr{R}_d$ is \textit{contracting} if the union $\mathcal{N}$ of the cores of its elements is finite, where the \textit{core} of a rational homeomorphism $f$ is the set of local actions of $f$ that occur on infinitely many different cones.  In this case, the finite set $\mathcal{N}$ is called the \textit{nucleus} of~$G$.  Nekrashevych has developed notions of limit spaces and partial self-coverings for contracting self-similar groups, revealing deep connections between these groups and the theory of dynamical systems.

If $G\leq\mathscr{R}_d$ is a self-similar group, the corresponding \textit{R\"over--Nekrashevych group} $V_dG$ is the subgroup of $\mathscr{R}_d$ generated by $G$ and the $d$-ary Thompson group~$V_d$.  Such groups were investigated by Scott \cite{Scott1,Scott2,Scott3}, and then R\"over \cite{roever}, and the connection to self-similar groups was made by Nekrashevych~\cite{NekraCuntz}.

\begin{theorem}[Nekrashevych 2018 \cite{NekraFP}]If $G\leq\mathscr{R}_d$ is a contracting self-similar group, then $V_dG$ is finitely presented.
\end{theorem}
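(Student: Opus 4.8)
The plan is to represent elements of $V_dG$ by a concrete combinatorial model and then write down a finite presentation whose relations are made \emph{finite in number} precisely by the contracting hypothesis. First I would fix the generating set. Since $G$ is contracting, its nucleus $\Nuc$ is finite, and a standard fact about contracting groups is that $\Nuc$ generates $G$; moreover $\Nuc$ is closed under taking sections (local actions), since the core of any element of $\Nuc$ is again assembled from states that occur on infinitely many cones. As $V_d$ is finitely presented, the group $V_dG=\langle V_d, G\rangle$ is generated by a finite generating set $X$ of $V_d$ together with the finite set $\Nuc$, so $V_dG$ is finitely generated.

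Next I would set up the normal form. Using self-similarity, every element $f\in V_dG$ can be encoded by a \emph{decorated tree pair}: a pair of finite rooted $d$-ary subtrees $(T_-,T_+)$ whose leaf sets are in bijection via some $\pi$, together with an assignment of a group element $g_\ell\in G$ to each leaf $\ell$ of $T_-$, so that $f$ carries the cone $C_\ell$ to the cone $C_{\pi(\ell)}$ with local action $g_\ell$. The fundamental move is \emph{expansion}: a leaf may be subdivided into its $d$ children, with the decoration $g_\ell$ replaced, via the wreath recursion $g_\ell=\sigma(g_{\ell,0},\dots,g_{\ell,d-1})$, by the top permutation $\sigma\in V_d$ and the sections $g_{\ell,i}$ on the children. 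Two decorated tree pairs represent the same element of $V_dG$ if and only if they admit a common expansion; this is the analogue of the familiar fact for $V_d$ itself, and it follows from self-similarity. For the candidate presentation I would then take relators of three kinds: (i) a finite Higman--Thompson-type presentation of $V_d$; (ii) for each $n\in\Nuc$ one expansion relation $n=\sigma_n(n_0,\dots,n_{d-1})$, where $\sigma_n$ is a word in $X$ and every section $n_i\in\Nuc$ (possible exactly because $\Nuc$ is section-closed); and (iii) a finite list of \emph{multiplication relations} recording the products $mn$ for $m,n\in\Nuc$. A product $mn$ need not lie in $\Nuc$, but by the contracting property there is a uniform level $k$ at which every section of $mn$ already lies in $\Nuc$, so $mn$ is recorded as a decorated tree pair of depth at most $k$ over $\Nuc$; finiteness of $\Nuc$ makes this a finite list.

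The main work is to prove that relations (i)--(iii) suffice, i.e.\ that every relation of $V_dG$ is a consequence of them. The strategy is a rewriting argument: given a word $w$ in $X\cup\Nuc$ representing the identity, I would use (i) and (ii) to bring $w$ into decorated-tree-pair form, then apply expansions to push all $G$-decorations down to a common, sufficiently deep level. The contracting hypothesis guarantees that after a bounded number of expansions each decoration is a product of a bounded number of nucleus elements, whereupon the relations of type (iii) reduce each decoration to a single element of $\Nuc$ (or to the identity) without leaving the finite relation set. Once all decorations are trivial, $w$ lies in the visible copy of $V_d$ and is killed by the relations of type (i).

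The \textbf{main obstacle} will be establishing termination and confluence of this rewriting. Concretely I must show (a) that repeated expansion stabilizes every decoration inside $\Nuc$ \emph{uniformly} in a bounded number of steps---this is exactly the quantitative content of ``contracting''---and (b) that the order in which one interleaves $V_d$-rearrangements with $G$-multiplications never forces a relation outside the chosen finite set. Controlling this interaction between the Thompson-group rearrangements and the self-similar decorations is the crux of the argument: it is what lets the (generally infinite) relation set of $G$ be absorbed into finitely many relations of $V_dG$, and it is why the statement can hold even when $G$ is not itself finitely presented. An alternative route, which I would keep in reserve, is to build a contractible complex of decorated tree pairs on which $V_dG$ acts and to apply Brown's finiteness criterion, with the finiteness of $\Nuc$ again supplying the cocompactness of the relevant skeleta.
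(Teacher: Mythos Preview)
The paper does not prove this theorem; it is simply quoted from Nekrashevych~\cite{NekraFP} with no argument supplied, so there is no ``paper's own proof'' to compare your proposal against.

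On the merits of your proposal: the overall architecture (decorated tree pairs as normal forms; relations of types (i)--(iii); rewriting with contraction providing uniform termination) is sound and is essentially the direct route to the result. One claim, however, is false as stated. It is \emph{not} a ``standard fact'' that the nucleus $\Nuc$ generates $G$. For a counterexample, let $G=S_d$ act on the $d$-ary rooted tree by permuting only the first level and fixing everything below; this group is self-similar and contracting with nucleus $\{1\}$, which plainly does not generate $S_d$. What \emph{is} true, and what you actually need, is that $V_d$ together with $\Nuc$ generates $V_dG$: for any $g\in G$ there is a level $n$ at which every section of $g$ lies in $\Nuc$, and the level-$n$ permutation induced by $g$ is realised by an element of $V_d$. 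This repairs your finite-generation step without affecting the rest of the argument.

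For context, your ``alternative route'' via a contractible complex and Brown's finiteness criterion is in fact closer to what Nekrashevych actually does in~\cite{NekraFP}, where the stronger conclusion (type $\mathrm{F}_\infty$, not merely finite presentability) is obtained. Your primary rewriting approach yields only finite presentability but is more elementary.
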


\section{Hyperbolic in Rational}

In 2021, the authors and Francesco Matucci proved the following theorem.

\begin{theorem}[Belk--Bleak--Matucci 2021 \cite{BelkBleakMatucci}] Every hyperbolic group $G$ embeds in the rational group $\mathscr{R}$. \end{theorem}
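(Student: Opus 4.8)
The plan is to realize $G$ as a group of rational homeomorphisms of a Cantor space built from the dynamics of $G$ on its Gromov boundary $\partial G$. The guiding example is the free group $F_n$, whose boundary is already a Cantor set of infinite reduced words: $F_n$ acts on $\partial F_n$ by left multiplication followed by free reduction, this action is faithful, and free reduction is a finite-state (transducer) operation, so the action lands in $\mathscr{R}$. The difficulty in general is that for an arbitrary hyperbolic group $\partial G$ need not be totally disconnected (for a surface group $\partial G$ is a circle), so one cannot hope to act on $\partial G$ itself inside $\mathscr{R}$, and the boundary must be replaced by a totally disconnected space carrying the ``same'' dynamics.

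First I would fix a finite symmetric generating set $S$, so that the Cayley graph is $\delta$-hyperbolic, and invoke Cannon's theorem that a hyperbolic group has only finitely many cone types; equivalently, the set of geodesic words is a regular language accepted by a finite \emph{geodesic automaton}. This finiteness is the engine of the whole argument: it is what will force the maps we build to have only finitely many local actions, and hence to be rational in the sense of $\mathscr{R}$.

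Next I would construct a Cantor space together with a $G$-action. Rather than $\partial G$, one works with the space $\Omega$ of infinite geodesic rays based at the identity, encoded as infinite accepted paths of the geodesic automaton; this is a closed, totally disconnected subspace of $S^\omega$ which, after re-encoding $S$ in binary, sits inside $\C=\{0,1\}^\omega$, and for non-elementary $G$ it has no isolated points and so is a Cantor space by Brouwer's characterization. There is a continuous surjection $\Omega\to\partial G$ collapsing rays that converge to the same boundary point, and the fellow-traveller property of hyperbolic geodesics controls its fibres. The key computation is that left multiplication by a generator, followed by re-geodesicizing back to a based ray, alters only a bounded initial segment of a ray before settling into behaviour dictated by the cone type reached; since there are finitely many cone types, this operation is computed by a finite transducer, so each generator acts by an element of $\mathscr{R}$ and the image of $G$ lies in $\mathscr{R}$.

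The hard part, and where I expect the real work to concentrate, is arranging that the resulting homomorphism $G\to\mathscr{R}$ is injective. Two obstacles must be overcome simultaneously: the action of $G$ on $\partial G$ can have a nontrivial finite kernel, so one must lift to the finer ray space $\Omega$, which remembers geodesic rather than merely asymptotic data, and verify that this genuinely separates group elements; and one must check that re-geodesicization is a single well-defined homeomorphism of $\Omega$ uniformly across all of $G$, rather than a coarse or multivalued operation. The elementary cases ($G$ finite or virtually cyclic), where $\partial G$ is too small to carry a faithful Cantor action of this kind, would be handled separately by explicit finite and odometer-type transducers. Establishing faithfulness of the lifted action, together with the uniform finite-state control of re-geodesicization that makes it rational, is the crux on which the theorem turns.
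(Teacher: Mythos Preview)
Your heuristic is sound and your identification of the crux is exactly right, but the crux is where the proposal breaks rather than where it merely concentrates work. The group $G$ does not act on the space $\Omega$ of geodesic rays from the identity: left-multiplying a ray $\gamma$ by $g$ yields a geodesic ray from $g$, and ``re-geodesicizing'' to a ray based at $e$ requires choosing one among possibly several geodesic rays from $e$ asymptotic to $g\gamma$. No such choice is canonical, and any rule (shortlex, say) need not be compatible with the group law: applying $g$ then $g^{-1}$, re-geodesicizing at each step, can fail to return to~$\gamma$. So the map you describe is in general neither a homeomorphism of $\Omega$ nor does it assemble into a homomorphism $G\to\mathrm{Homeo}(\Omega)$, let alone one landing in~$\mathscr{R}$. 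You flag exactly this point, but treat it as a verification to be carried out, when in fact it is the obstruction that forces a different construction.

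The paper sidesteps this by replacing $\Omega$ with Gromov's \emph{horofunction boundary} $\partial_h G$, whose points are (normalized) limits of distance functions $d(\,\cdot\,,v)-d(x_0,v)$. Because $G$ acts on its Cayley graph by isometries, it acts on such functions by precomposition, so $\partial_h G$ carries a genuine $G$-action with no choices involved. The paper then realizes $\partial_h G$ as the boundary of a \emph{tree of atoms}, built not from cone types but from the finitely many ``vector fields'' on balls $B_n$ (orientations of edges according to which endpoint is closer to a far-away vertex); this tree carries a \emph{self-similar} structure that plays the role your cone-type automaton was meant to play and yields rationality of the $G$-action. Faithfulness and absence of isolated points are arranged not by case analysis on elementary groups but uniformly, by passing to $G*F_2$ when necessary. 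Your instinct that hyperbolicity supplies finite-state control is correct, but both the Cantor model and the mechanism producing the $G$-action are different from, and essential to, the actual proof.
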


The key idea is to consider the action of $G$ on a certain totally disconnected boundary $\partial_h G$ defined by Gromov, known as the \textit{horofunction boundary}.  In the case where this boundary is a Cantor space and the action is faithful, we construct a homeomorphism $\partial_h G\to \{0,1\}^\omega$ that conjugates $G$ into~$\mathscr{R}$.  (If $\partial_hG$ is not a Cantor space or the action is not faithful, this can be rectified by replacing $G$ with the free product $G*F_2$.)

The construction of the homeomorphism $\partial_h G\to \{0,1\}^\omega$ involves realizing $\partial_h G$ as the boundary of a certain tree, which we call the \textit{tree of atoms}.  For a hyperbolic group $G$, this tree has the structure of a \textit{self-similar tree}, as defined below, which is fundamental to the proof of rationality.

\subsection{Self-Similar Trees}

The definition of a self-similar tree is technical, but the main idea is that a self-similar tree is an infinite, rooted tree with finitely many ``types'' of nodes, where nodes of the same type are related in a certain way.  Here is a typical example:
\begin{center}
\includegraphics{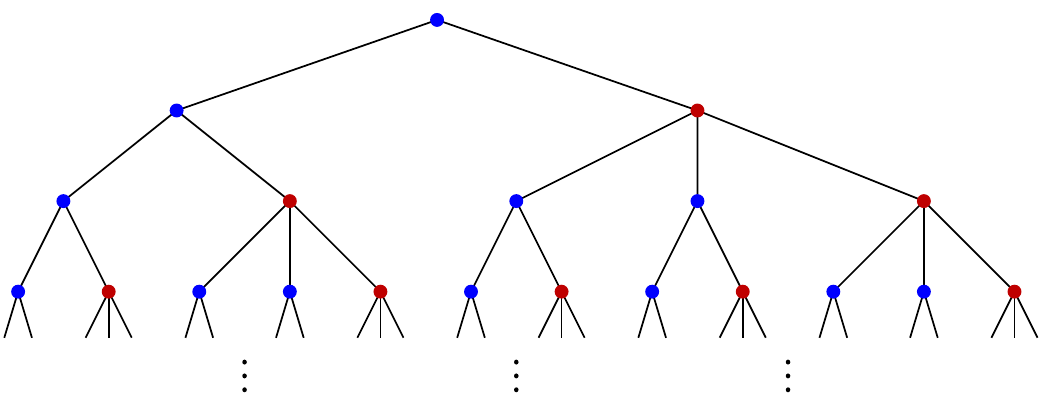}
\end{center}
In this case, there are two types of nodes (indicated by red and blue dots), where each blue node has one red and one blue child, and each red node has one red and two blue children.

In the picture above, the children of each node have a left-to-right order, but this is not necessarily part of the structure of a self-similar tree.  Instead, if two nodes have the same type, a self-similar tree specifies a finite set of isomorphisms between the subtrees rooted at those nodes.  These isomorphisms form a groupoid, and are closed under taking restrictions. 
 (This is the technical part of the definition.) 

If $T$ is a self-similar tree, its boundary $\partial T$ is a compact, totally disconnected metrizable space.  The notion of a ``rational homeomorphism'' can be extended to homeomorphisms of $\partial T$ by defining what it means for a homeomorphism to have finitely many local actions. 

\begin{theorem}[Belk--Bleak--Matucci 2021 \cite{BelkBleakMatucci}]
If the boundary $\partial T$ of a self-similar tree has no isolated points, then its group of rational homeomorphisms is isomorphic to~$\mathscr{R}$.
\end{theorem}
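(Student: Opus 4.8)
The plan is to construct a single homeomorphism $h\colon\partial T\to\C$ that is rational in both directions and then transport the group structure by conjugation. Since $\partial T$ is compact, totally disconnected, and metrizable, and by hypothesis has no isolated points, Brouwer's theorem already gives an abstract homeomorphism $\partial T\cong\C$; the entire difficulty is to arrange that this homeomorphism, together with its inverse, respects rationality, so that conjugation by it carries the rational homeomorphism group of $\partial T$---which I will write $\R(\partial T)$, so that $\R(\C)=\mathscr{R}$---isomorphically onto $\mathscr{R}$.

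First I would set up the appropriate framework. For any two self-similar trees $S$ and $T$, I would say a homeomorphism $h\colon\partial S\to\partial T$ is \emph{rational} by imitating the definition already in force on $\C$: every cone of $\partial S$ carries a local action, now a partial map into a subtree-boundary of $T$, and $h$ is rational when only finitely many distinct local actions occur. The two structural facts to prove are the groupoid closure properties. First, a composite $g\circ h$ of rational homeomorphisms $\partial S\to\partial T\to\partial U$ is again rational, since each of its local actions is a composite of one local action of $g$ with one of $h$, of which there are finitely many. Second, the inverse of a rational homeomorphism is rational. Both follow the pattern of the classical transducer arguments underlying the fact that $\mathscr{R}$ is a group, using that the self-similar structures provide only finitely many node types together with a restriction-closed groupoid of subtree isomorphisms to serve as the ``canonical'' identifications.

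Granting the framework, the isomorphism follows formally once a bi-rational $h\colon\partial T\to\C$ is available. The assignment $f\mapsto hfh^{-1}$ sends $\R(\partial T)$ into $\mathscr{R}$, because $hfh^{-1}$ is a composite of the rational maps $h^{-1}\colon\C\to\partial T$, $f\colon\partial T\to\partial T$, and $h\colon\partial T\to\C$; it is an injective homomorphism as conjugation by a bijection; and it is surjective, since for $g\in\mathscr{R}$ the element $h^{-1}gh$ lies in $\R(\partial T)$ (again by composition, using that $h^{-1}$ is rational) and is carried to $g$. Thus $\R(\partial T)\cong\mathscr{R}$, and all the content is concentrated in producing $h$.

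The hard part is exactly this construction, and it is where the no-isolated-points hypothesis is indispensable. I would build $h$ recursively, matching the cones of $T$ with binary cones of $\C$: to a node of $T$ mapped onto a cone $C_\alpha$ I assign a finite binary ``recoding block'' that partitions $C_\alpha$ into one subcone per child of the node, first descending past any non-branching nodes until genuine branching is reached. No isolated points guarantees that such branching always occurs; moreover, since the type of a node determines its subtree up to the specified isomorphisms, the depth at which branching first appears depends only on the type and is therefore uniformly bounded. Because $T$ has only finitely many types, the recoding blocks can be chosen to depend only on type, and carrying this out compatibly across nodes of the same type yields a homeomorphism with finitely many local actions whose inverse recoding is finite as well, so that $h$ is rational in both directions. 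The two points demanding care are verifying that $h$ is genuinely bijective with continuous inverse---here no isolated points is essential, since an unmatched isolated point would obstruct surjectivity onto the perfect space $\C$---and checking that the blocks can be chosen uniformly enough that $h$ and $h^{-1}$ have finitely many local actions \emph{simultaneously}.
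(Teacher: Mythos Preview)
The paper under review is a survey; it states this theorem with attribution to \cite{BelkBleakMatucci} but gives no proof, so there is nothing in the present paper to compare your proposal against directly. That said, your strategy is the right one and matches the argument in the cited source: one extends the notion of rational homeomorphism to maps between boundaries of different self-similar trees, checks that rationality is closed under composition and inversion, and then constructs a single bi-rational homeomorphism $h\colon\partial T\to\C$ by recoding each node type with a fixed finite binary word, so that conjugation by $h$ carries $\R(\partial T)$ isomorphically onto~$\mathscr{R}$.

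Two small remarks on your execution. First, your use of the no-isolated-points hypothesis is exactly right: it guarantees that below any node there is eventual branching, and finiteness of the set of types then bounds the depth at which branching first occurs, so the recoding blocks are uniformly finite. Second, the place where the groupoid structure genuinely matters is in checking that local actions of $h$ at two nodes of the same type agree after identifying the subtrees via one of the specified isomorphisms; your sketch gestures at this but would need to be made precise, since ``depends only on type'' must be interpreted relative to those groupoid identifications rather than to some external labelling of children.
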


\subsection{The Tree of Atoms}

To prove that every hyperbolic group $G$ embeds into $\mathscr{R}$, we must show that $G$ acts faithfully by rational homeomorphisms on the boundary of some self-similar tree.  This tree is the tree of atoms, whose boundary is the horofunction boundary $\partial_h G$ of $G$.

The tree of atoms and horofunction boundary can be defined for any locally finite graph $X$.  To do so, fix a base vertex in $X$, and for each $n\geq 0$ let $B_n$ be the $n$-ball centered at the base vertex, including both vertices and edges.  Given any vertex $v$ in $X$, the corresponding \textit{vector field} on $B_n$ is obtained by orienting each edge of $B_n$ in the direction that points towards~$v$:
\begin{center}
\includegraphics{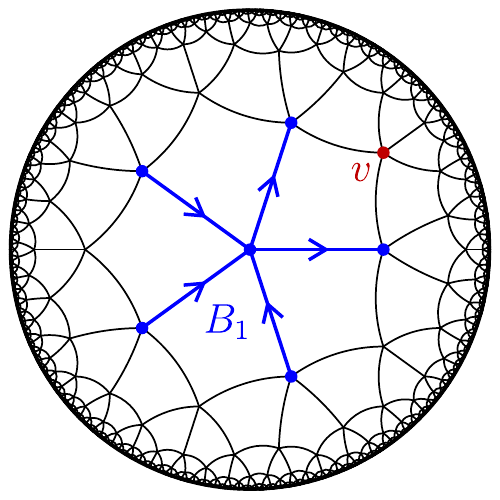}
\end{center}
That is, each edge of $B_n$ is oriented so that its terminal vertex is closer to $v$ than its initial vertex.  Edges whose two endpoints have the same distance from $v$ are not oriented.

Different vertices in $X$ correspond to different vector fields on $B_n$, but there are only finitely many possibilities.  The set of all vertices $v$ that determine a given vector field on $B_n$ is an \textit{$n$th-level atom}.  There are finitely many such atoms, and these form a partition of the vertices of~$X$.
\begin{center}
\includegraphics{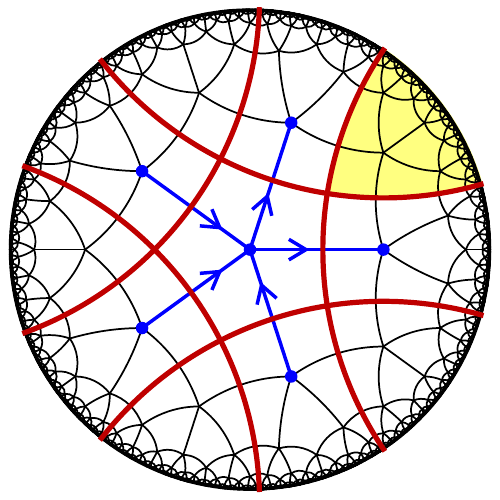}
\end{center}
In the example above, $X$ has been partitioned into eleven $1$st-level atoms, with the highlighted atom corresponding to the indicated vector field on~$B_1$.  Each of these atoms is subdivided further at level two.
\begin{center}
\includegraphics{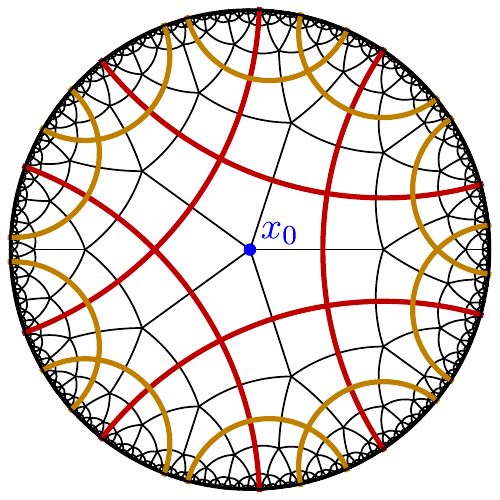}
\end{center}
The \textit{tree of atoms} for $X$ is the rooted tree of all infinite atoms in~$X$.  The root is the unique $0$th-level atom, which is the whole graph~$X$.  In the above example, this root has ten children, corresponding to the ten infinite atoms, each of these $1$st-level nodes has three children, and so forth.

The boundary of the tree of atoms is the \textit{horofunction boundary} $\partial_h X$ of~$X$, as defined by Gromov.  This space is totally disconnected and compact, and if $X$ is hyperbolic there is a finite-to-one surjection from $\partial_h X$ to the Gromov boundary~$\partial X$ (see \cite{perego2023rationality}).

\begin{theorem}[Belk--Bleak--Matucci 2021 \cite{BelkBleakMatucci}]If $G$ is a hyperbolic group acting geometrically on $X$, then the tree of atoms for $X$ has the structure of a self-similar tree, and elements of $G$ act as rational homeomorphisms of~$\partial_h X$.
\end{theorem}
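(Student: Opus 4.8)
The plan is to play cocompactness and hyperbolicity against each other in complementary roles: cocompactness of the action supplies \emph{finiteness}, in that there are only finitely many orbits of local configurations in $X$, while $\delta$-hyperbolicity supplies \emph{locality}, in that the way the tree of atoms branches below a given node is controlled by the geometry in a bounded neighbourhood. First I would fix a locally finite $\delta$-hyperbolic graph $X$ on which $G$ acts geometrically (for instance a Cayley graph), choose the base vertex $o$ inside a compact fundamental domain, and recall that a level-$n$ atom is precisely a fibre of the map that sends a vertex $v$ to the vector field it induces on $B_n$, i.e.\ to the pattern of distance comparisons $d(v,\cdot)$ across the edges of $B_n$. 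An infinite branch $A_0\supset A_1\supset\cdots$ of the tree of atoms is then exactly a point of $\partial_h X$, and $G$ acts on this set of branches through the usual basepoint-normalised action of isometries on horofunctions.

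The heart of the argument, and the step I expect to be the main obstacle, is to show that the tree of atoms has only finitely many node types. The claim I would aim for is that the isomorphism type of the subtree hanging below a level-$n$ atom $A$ is determined by a bounded amount of combinatorial data: the induced vector field read off only on a $\delta$-controlled neighbourhood of the \emph{active frontier} of $A$, meaning the part of $B_n$ where the orientation is still in flux as one passes to $B_{n+1}, B_{n+2},\dots$. Hyperbolicity is what makes this plausible: for a vertex $v$ lying far out in the direction of $A$, the orientation toward $v$ of an edge deep inside $B_n$ is forced by the coarse geodesic direction, so that the genuine branching is concentrated near the frontier, and the thin-triangles condition bounds how far frontier effects can propagate. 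Making ``active frontier'' precise and proving the accompanying stabilisation estimate — essentially that horofunctions on a hyperbolic space are locally determined up to bounded error — is the delicate point where the hyperbolic constant really enters. Granting it, cocompactness finishes the job: translating each atom by an element of $G$ so that its frontier sits near $o$ leaves only finitely many distinct local pictures, hence only finitely many types.

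With finitely many types in hand, I would build the groupoid of subtree isomorphisms demanded by the definition of a self-similar tree. Whenever two atoms $A,A'$ share a type, their bounded frontier configurations coincide, and any combinatorial isomorphism $\gamma$ matching these configurations propagates outward, identifying the subdivision of $A$ with that of $A'$ at every deeper level and thereby inducing an isomorphism of the subtrees rooted at $A$ and $A'$. Collecting all such $\gamma$ and closing under composition and inverse yields a groupoid; restricting a matching from a frontier to the smaller frontier attached to a child atom yields closure under restriction. Once the frontier formalism is set up, verifying the groupoid axioms is routine bookkeeping, and this establishes the first assertion that the tree of atoms is a self-similar tree.

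Finally I would verify the second assertion, that each $g\in G$ acts as a rational homeomorphism of $\partial_h X$, i.e.\ with finitely many local actions. In tree-of-atoms terms $g$ carries the branch $A_0\supset A_1\supset\cdots$ to $gA_0\supset gA_1\supset\cdots$, and its local action at a deep atom $A$ is, after normalising both $A$ and $gA$ back toward $o$, just an isomorphism between the two bounded frontier neighbourhoods determined by $g$. By cocompactness only finitely many such frontier isomorphisms can arise as $A$ ranges over all atoms, so $g$ has finitely many local actions and is rational. Combined with the self-similar structure already obtained (and with the preceding theorem identifying the rational homeomorphism group of such a boundary with $\mathscr{R}$), this exhibits the action of $G$ by rational homeomorphisms of $\partial_h X$; faithfulness, in the cases where it can fail, is repaired by passing to $G*F_2$ as indicated above.
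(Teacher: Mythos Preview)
The paper you are comparing against is a survey and announcement; it \emph{states} this theorem with a citation to \cite{BelkBleakMatucci} but does not prove it. There is therefore no proof in the present paper to compare your proposal against. What the paper does provide is the surrounding context: the definition of atoms via vector fields on balls~$B_n$, the informal description of a self-similar tree as a rooted tree with finitely many node types together with a restriction-closed groupoid of subtree isomorphisms, and the identification of~$\partial_h X$ with the boundary of the tree of atoms.

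Your outline is broadly consistent with that context and with the architecture one expects from the cited paper: cocompactness is indeed the source of finiteness of types, and hyperbolicity is indeed what localises the branching so that a type is determined by bounded data. Your identification of the ``active frontier'' stabilisation estimate as the crux is accurate; in the actual argument this is made precise not via horofunction error bounds directly but via the observation that the vector field an atom induces on~$B_n$ already determines, up to $G$-translation, the pattern of its subdivision, because geodesics to far-away points fellow-travel. One point to be careful about: the self-similar structure requires not just finitely many isomorphism types of subtrees but a specified groupoid of isomorphisms closed under restriction, and your sketch of how to produce these (``any combinatorial isomorphism~$\gamma$ matching these configurations propagates outward'') is the place where real work hides; the isomorphisms in the cited construction come from honest elements of~$G$ translating one atom's neighbourhood to another's, not from abstract combinatorial matchings. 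Likewise, your rationality argument for~$g\in G$ is morally right but would need the same bookkeeping: the local action of~$g$ at a deep atom is governed by how~$g$ moves a bounded window, and cocompactness bounds the number of such windows up to~$G$-translation.
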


In particular, if $X$ is a Cayley graph of $G$, then $G$ acts geometrically on~$X$, and we write $\partial_h G$ for the corresponding horofunction boundary (though $\partial_h G$ depends on the generating set chosen for the Cayley graph).  As long as $\partial_h G$ has no isolated points and $G$ acts faithfully on $\partial_h G$,  this theorem gives an embedding of $G$ into the rational group.  As mentioned above, these two conditions can always be assured by replacing $G$ with $G*F_2$.

\section{Hyperbolic in Simple}

In upcoming work, the authors together with Francesco Matucci and Matthew Zaremsky establish the Boone--Higman conjecture for hyperbolic groups.

\begin{theorem}[Belk--Bleak--Matucci--Zaremsky 2023 \cite{BelkBleakMatucciZaremsky}]
Every hyperbolic group $G$ embeds into a finitely presented simple group.
\end{theorem}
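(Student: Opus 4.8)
The plan is to promote the 2021 embedding $G\hookrightarrow\R$ into an embedding of $G$ into a finitely presented \emph{simple} group, by interposing such a group between $G$ and the rational group (which is simple but not even finitely generated). After replacing $G$ by $G\ast F_2$ if necessary, I may assume $G$ acts faithfully by rational homeomorphisms on the Cantor space $\partial_h G=\partial T$, where $T$ is the self-similar tree of atoms, and that the full rational homeomorphism group of $\partial T$ is $\R$. The first genuinely new input, and the place where hyperbolicity is essential, is a \emph{contraction} estimate: using thinness of geodesic triangles and fellow-traveling of geodesics, one shows that the local actions of elements of $G$ on deep atoms stabilize, so that the union of cores is finite and the resulting similarity structure on $T$ has a finite nucleus $\Nuc$.

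With contraction established, I would form the R\"over--Nekrashevych-type group $V_T G=\langle V_T,\,G\rangle\le\R$, where $V_T$ is the Thompson-like group of piecewise-canonical homeomorphisms of $\partial T$ attached to the self-similar structure (the analogue of $V_d$ for $T$). The goal of this stage is to prove $V_T G$ finitely presented by extending Nekrashevych's finite-presentation theorem, stated in the excerpt for contracting $G\le\R_d$ on a regular $d$-ary tree, to the irregular, non-synchronous tree of atoms. In practice this means constructing a highly connected Stein--Farley-type complex on which $V_T G$ acts cocompactly with well-controlled cell stabilizers, yielding type $F_\infty$ and in particular finite presentability; the contraction estimate is exactly what makes the quotient of this complex finite.

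It then remains to extract a finitely presented \emph{simple} group still containing $G$, and here there is a real subtlety: the abelianization of $V_T G$ generally factors through $G^{\mathrm{ab}}$, so when $G$ is not perfect the naive candidate, the commutator subgroup $[V_T G,V_T G]$, is simple (by a Higman--Epstein-style argument exploiting that the action on $\partial T$ supports arbitrarily small prefix-replacement elements and is richly transitive on cones) but does \emph{not} contain the original copy of $G$, and no ``$g$ on one piece, $g^{-1}$ on another'' device repairs this, since such an assignment fails to be a homomorphism for nonabelian $G$. The clean fix I would adopt is to take as the target not $V_T G$ itself but the twisted Brin--Thompson group $SV_G$ built from the faithful $G$-action on $\partial_h G$: this group is \emph{simple on the nose}, contains $G$ by construction, and inherits finiteness properties from $G$ (which, being hyperbolic, is of type $F_\infty$) together with the finiteness of the action supplied by contraction. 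This routes both simplicity and the embedding around the abelianization problem, leaving only the transfer of finiteness properties to be verified.

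The main obstacle, on either route, is finite presentability. Generalizing Nekrashevych's theorem to the non-synchronous, irregular tree of atoms, building the correct contractible complex and bounding its cell stabilizers so that the contraction estimate forces cocompactness, is where essentially all of the technical weight lies; equivalently, in the twisted Brin--Thompson formulation one must show that the $G$-action on $\partial_h G$ is finitary enough that the finiteness properties of $G$ pass to $SV_G$. Establishing this finiteness for a general contracting similarity structure coming from a hyperbolic group, uniformly in $G$, is the crux of the argument, with the contraction lemma from the first step as its indispensable engine.
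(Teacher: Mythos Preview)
Your outline correctly identifies the main ingredients---the contraction estimate, the need to generalise Nekrashevych's finite-presentability theorem to the self-similar tree of atoms, and the obstruction that the commutator subgroup of $V_TG$ (equivalently $[[G]]$) need not contain~$G$---but the way you resolve the last point contains a genuine gap.

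Your ``clean fix'' is to build the twisted Brin--Thompson group $SV_G$ directly from the action of $G$ on $\partial_h G$. This does not work: Zaremsky's theorem requires the acting group to be \emph{oligomorphic} on $S$, and a hyperbolic group acting on an orbit in its horofunction (or Gromov) boundary is essentially never oligomorphic. Already for $G=F_2$ acting on the orbit of $a^\infty$, the pairs $(a^\infty,\,b^n a^\infty)$ for distinct $n\ge 1$ lie in distinct $G$-orbits, so there are infinitely many orbits on $S^2$. Contraction says nothing about this; it controls local actions on deep cones, not orbit-counts on tuples. Thus you cannot conclude that $SV_G$ is finitely presented, and Route~2 as stated collapses.

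The paper's resolution is precisely to \emph{combine} your two routes rather than choose between them. One first proves that the full group $[[G]]$ of piecewise-$G$ homeomorphisms is finitely presented, via the generalised Nekrashevych theorem you describe (this is where contraction is used). Because $[[G]]$ is full, its action on any single orbit $S\subset\partial_h G$ is automatically oligomorphic, and an argument of Belk--Hyde--Matucci gives finitely generated stabilisers of finite subsets. Zaremsky's theorem then applies to $[[G]]$, not to $G$, yielding an embedding $G\hookrightarrow[[G]]\hookrightarrow SV_{[[G]]}$ into a finitely presented simple group. So the hard finite-presentability work on $[[G]]$ is not bypassed by the twisted Brin--Thompson construction; it is the input that makes that construction applicable.
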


The key idea is to consider the group $[[G]]$ of all piecewise-$G$ homeomorphisms of the horofunction boundary~$\partial_h G$.  Here a homeomorphism $f$ of $\partial_h G$ is \textit{piecewise}-$G$ if there exists a finite partition of $\partial_h G$ into clopen sets such that $f$ agrees with an element of $G$ on each set from the partition.

Since $G$ acts by rational homeomorphisms of $\partial_h G$, the same holds for $[[G]]$.  The authors together with Matucci and Zaremsky prove that this group is contracting, and by a generalization of Nekrasevych's finite presentability theorem for R\"over--Nekrashevych groups it follows that $[[G]]$ is finitely presented.

Unfortunately, $[[G]]$ is not typically a simple group, though it does have simple commutator subgroup.  However, the first author and Matthew Zaremsky have defined a class of finitely presented simple groups called \textit{twisted Brin--Thompson groups} \cite{BelkZaremskyTwisted} into which the group $[[G]]$ can be shown to embed.

\subsection{Contracting Asynchronous Groups}

If $H$ is a group of homeomorphisms of a Cantor space $X$, we say that $H$ is \textit{full} if every piecewise-$H$ homeomorphism of $X$ is an element of~$H$.  This terminology comes from the theory of \'etale groupoids, where $H$ is full if and only if it is the topological full group of the corresponding groupoid of germs.  For example, the Thompson groups~$V_d$ are full, as is any R\"over--Nekrashevych group.  If $G$ is a hyperbolic group, then $[[G]]$ is a full group of homeomorphisms of the horofunction boundary~$\partial_h G$.

The authors together with Matucci and Zaremsky prove the following generalization of Nekrashevych's theorem on the finite presentability of R\"over--Nekrashevych groups.

\begin{theorem}[Belk--Bleak--Matucci--Zaremsky 2023 \cite{BelkBleakMatucciZaremsky}]\label{thm:FinitelyPresentedTheorem}
Let $T$ be a self-similar tree whose boundary $\partial T$ is a Cantor space, and let $H$ be a group of rational homeomorphisms of~$\partial T$.  If $V_T$ is minimal and $H$ is full, contracting, and contains $V_T$, then $H$ is finitely presented.
\end{theorem}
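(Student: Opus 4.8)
The plan is to generalize the proof strategy behind Nekrashevych's theorem on R\"over--Nekrashevych groups: build a highly connected complex on which $H$ acts and then invoke Brown's finiteness criterion. The two hypotheses divide the labor cleanly. Contraction supplies a finite nucleus $\Nuc$ that forces cocompactness, since every local action eventually restricts into $\Nuc$; and the minimality of $V_T$ supplies the transitivity and nondegeneracy that keep the relevant links connected and that furnish a finitely presented base case.

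First I would fix the combinatorial model. The cones of $\partial T$ cut out by admissible antichains of nodes of $T$ form a directed poset under refinement, and its geometric realization --- a Stein--Farley-style complex $\mathcal{X}$ --- carries a natural $H$-action in which $H$ permutes cones via the groupoid of subtree isomorphisms while the local actions deep inside each cone record the element being applied. Because $H$ is full and contains $V_T$, and because minimality makes $V_T$ act transitively on cones of a given type, the stabilizers of vertices and edges of $\mathcal{X}$ are, up to groupoid-twisting, products of smaller groups of the very same kind; finite presentability of a stabilizer then reduces by induction on complexity to the finite presentability of $V_T$ itself, which minimality guarantees as the base case.

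Next I would establish cocompactness of the $2$-skeleton, which is exactly where contraction does its work. Because every local action of every element of $H$ lands in the finite nucleus $\Nuc$ after boundedly many restrictions, there are only finitely many $H$-orbits of cells in each dimension, so $\mathcal{X}$ has cocompact skeleta with finitely presented stabilizers. Brown's criterion then reduces the theorem to a single topological statement: that $\mathcal{X}$ is simply connected.

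The hard part will be that connectivity estimate. The standard tool is a discrete Morse function on $\mathcal{X}$ --- for instance the number of cones in the cut --- for which one must show that the descending links are simply connected, ideally highly connected so that the same argument would yield higher finiteness properties. For the regular $d$-ary tree these descending links are the complexes analyzed by Brown and Stein in their treatment of Thompson's groups, but in the general self-similar-tree setting the rigid prefix-replacement maps are replaced by arbitrary elements of the subtree-isomorphism groupoid, so the links must be reassembled from that groupoid's expansion data. Showing that these generalized descending links remain simply connected --- and verifying that minimality of $V_T$ is precisely the hypothesis ruling out the degenerate node-types for which a link could disconnect --- is the technical crux on which the whole argument turns.
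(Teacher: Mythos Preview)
The paper does not contain a proof of this theorem. This is a survey/announcement note: the statement is attributed to the forthcoming work \cite{BelkBleakMatucciZaremsky} and is presented without argument beyond the remark that it is ``a generalization of Nekrashevych's theorem on the finite presentability of R\"over--Nekrashevych groups.'' There is therefore nothing in the paper to compare your proposal against.

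That said, your outline is a coherent and plausible strategy, and it is indeed the natural generalization of Nekrashevych's argument: a Stein--Farley complex built from partitions into cones, Brown's criterion, cocompactness from the finite nucleus, and simple connectivity via a Morse argument on descending links. You have also correctly identified where the real work lies---the link connectivity in the self-similar-tree setting, where the rigid $d$-ary structure is replaced by the groupoid of subtree isomorphisms. One point to be more careful about: your claim that vertex stabilizers reduce ``by induction on complexity'' to the finite presentability of $V_T$ is not quite right as stated. In the Nekrashevych setting the vertex stabilizers are finite extensions of direct products of copies of the nucleus group, not smaller instances of the same full group; in your generalized setting the analogous statement needs to be formulated in terms of the finite nucleus $\Nuc$ and the type groupoid, and the finiteness of $\Nuc$ (rather than an inductive descent) is what makes those stabilizers finitely presented.
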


Here $V_T$ is a naturally defined Thompson-like group of homeomorphisms associated to $T$, consisting of all homeomorphisms that piecewise agree with the isomorphisms between subtrees that define the self-similar structure.  The group $V_T$ is \textit{minimal} if the orbit of every point in $\partial T$ is dense.  The requirement that $V_T$ is minimal ensures that $V_T$ is itself finitely presented.

In the case of a hyperbolic group $G$, we prove that the action of $G$ on its horofunction boundary $\partial_h G$ is contracting, and it follows that the same holds for the group $[[G]]$ of piecewise-$G$ homeomorphisms.  The group $[[G]]$ is full and contains $V_T$, where $T$ is the tree of atoms for~$G$.  Finally, we can ensure $V_T$ is minimal by replacing $G$ with $G*F_2$, if necessary. It follows that $[[G]]$ is finitely presented.
 
\subsection{Twisted Brin--Thompson groups} 

In 2022, the first author and Matthew Zaremsky introduced a new class of groups called twisted Brin--Thompson groups \cite{BelkZaremskyTwisted}, which are a variation on the higher-dimensional Thompson groups $nV$ defined by Brin \cite{BrinHigherV,BrinHigherVPresentations}. 

Specifically, if $S$ is any countable set, let $\C^S$ be the Cantor space $\prod_S \C$, where $\C=\{0,1\}^\omega$.  If $G$ is any group of permutations of $S$, then the restricted wreath product $W = V\wr G = \bigl(\bigoplus_{S} V)\rtimes G$ acts on $\C^S$ by homeomorphisms.   The \textit{twisted Brin--Thompson group} $SV_G$ is the group of all homeomorphisms of $\C^S$ which are piecewise-$W$. 

Belk and Zaremsky prove that $SV_G$ is always simple, and give conditions under which it is finitely presented \cite{BelkZaremskyTwisted}.  These conditions were later improved by Zaremsky, yielding the following theorem.

\begin{theorem}[Zaremsky 2022 \cite{zaremskyTaste}]
Let $G$ be a finitely presented, oligomorphic group of permutations of a set $S$. Suppose the stabilizer in $G$ of any finite subset of $S$ is finitely generated. Then
$G$ embeds as a subgroup of a finitely presented simple group, namely the twisted Brin–Thompson group $SV_G$.
\end{theorem}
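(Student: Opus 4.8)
The goal is to establish three facts: that $G$ embeds into $SV_G$, that $SV_G$ is simple, and that $SV_G$ is finitely presented. Simplicity of $SV_G$ holds for every $G$ by the theorem of Belk and Zaremsky quoted above, so I would cite it directly; the embedding is also immediate. Indeed $SV_G$ contains the wreath product $W=V\wr G=\bigl(\bigoplus_S V\bigr)\rtimes G$, hence its top factor $G$, acting on $\C^S=\prod_S\C$ by permuting the Cantor-space coordinates according to the action of $G$ on $S$. This coordinate action is faithful exactly because $G$ acts faithfully on $S$, so $G\hookrightarrow SV_G$ with no extra hypotheses. Thus the three standing assumptions --- that $G$ is oligomorphic, finitely presented, and has finitely generated stabilizers of finite subsets of $S$ --- are all in service of the one nontrivial point, the finite presentability of $SV_G$.

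For that I would run the Stein--Farley/Brown strategy standard for Thompson-like groups. First I would realize $SV_G$ as acting on a \emph{Stein--Farley complex} $X$ whose cells record the dyadic brick partitions of $\C^S$, ordered by refinement, together with the decoration of the finitely many coordinates of $S$ that each brick constrains. The poset of such configurations is directed (any two partitions have a common refinement), which should yield contractibility of $X$ once the connectivity input below is supplied. I would then apply Brown's finiteness criterion in its Bestvina--Brady Morse-theoretic form, with Morse function given by the number of bricks (equivalently, the size of the active configuration): to conclude that $SV_G$ is of type $F_2$, that is, finitely presented, it suffices that the action be cocompact on the $2$-skeleton, that the stabilizers of $0$-cells be of type $F_2$ and of $1$-cells be of type $F_1$, and that the relevant descending links be simply connected.

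This is exactly where the three hypotheses are consumed. Oligomorphicity --- finitely many $G$-orbits on the $n$-element subsets of $S$ for every $n$ --- yields finitely many $SV_G$-orbits of configurations of each size, which is the required cocompactness on skeleta. The stabilizer of a cell splits, up to finite index, as a product of Thompson-like factors built from $V$, which are of type $F_\infty$, with a twisting factor that is the stabilizer in $G$ of the finite set of coordinates active in the corresponding configuration. The base configuration is empty, so its stabilizer is $\mathrm{Stab}_G(\varnothing)=G$, which must be finitely presented to give $0$-cell stabilizers of type $F_2$; the higher cells carry nonempty finite active sets, so their stabilizers are stabilizers in $G$ of nonempty finite subsets, which by the dimension shift in Brown's criterion need only be finitely generated. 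These are precisely the hypotheses that $G$ be finitely presented and that its finite-subset stabilizers be finitely generated.

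The main obstacle is the remaining combinatorial input: proving that the descending links of the Morse function are simply connected --- and, for the sharper statement about higher finiteness lengths, highly connected. This is the twisted, infinite-$S$ analogue of the connectivity arguments for the ordinary Brin--Thompson groups $nV$, and I would carry it out by exhibiting explicit discrete matchings, or ``closing-off'' expansions, on the poset of refinements of a configuration. The delicate point is that the twist by $G$ permutes the coordinates globally, so one must check that the $G$-action does not obstruct the matchings and that oligomorphicity keeps the links finite enough to analyze. Once this connectivity is established, Brown's criterion returns type $F_2$, and together with the embedding and with Belk and Zaremsky's simplicity theorem this shows that $G$ embeds into the finitely presented simple group $SV_G$.
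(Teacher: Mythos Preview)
The paper does not give a proof of this theorem; it is quoted as a result of Zaremsky \cite{zaremskyTaste}, improving earlier conditions of Belk--Zaremsky \cite{BelkZaremskyTwisted}, and is then applied as a black box to the group $[[G]]$. There is therefore no proof in the present paper to compare your argument against.

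That said, your sketch is broadly in line with the strategy in the cited work: the embedding of $G$ and the simplicity of $SV_G$ come directly from \cite{BelkZaremskyTwisted}, and finite presentability is obtained via a Stein-type complex and Brown's criterion, with oligomorphicity supplying cocompactness on sublevel sets and the hypotheses on $G$ and on finite-set stabilizers controlling the finiteness properties of cell stabilizers. One small correction: the stabilizer of the trivial one-brick configuration is all of $SV_G$, not $G$ itself; the cell stabilizers that matter are wreath-type groups built from Thompson-like pieces together with the $G$-stabilizer of the finite set of active coordinates, and it is the finiteness properties of this last factor that the hypotheses on $G$ govern. The connectivity of descending links that you flag as the main obstacle is indeed the substantive step, though it is handled in the cited work by identifying the links with complexes already known to be highly connected rather than by ad hoc matchings.
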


Here $G$ is \textit{oligomorphic} if the induced action of $G$ on $S^k$ has finitely many orbits for every $k\geq 1$.  Oligomorphic groups were introduced by Peter Cameron~\cite{CameronOligomorphic}.

It is easy to see that any full group of homeomorphisms of a Cantor space acts oligomorphically on any one orbit.  In particular, if $G$ is a non-elementary hyperbolic group acting faithfully on $\partial_h G$ we obtain a faithful oligomorphic action of $[[G]]$ on a certain countable dense subset of $\partial_h G$. 
 Using an argument of the first author, James Hyde, and Francesco Matucci \cite{Belk-Hyde-Matucci-2}, the stabilizers of finite sets in this action are finitely generated, so it follows from the above theorem that $[[G]]$ embeds into a finitely presented simple group.

\section{Acknowledgements}
The authors would like to thank Carl-Fredrik Nyberg Brodda for his translation of Kuznetsov's article \cite{Kuznetsov}.  We would also like to thank Francesco Matucci and Matthew Zaremsky for valuable conversations around the topic of this article. 

\bibliographystyle{plain}
\bibliography{biblio.bib}
 \end{document}